\documentclass[11pt]{article}

\usepackage{amsmath,amssymb,amsthm}
\usepackage{graphicx}
\usepackage{subfigure}
\usepackage{enumerate}
\usepackage{fullpage}
\usepackage{url}

\newtheorem{theorem}{Theorem}

\newtheorem{conjecture}{Conjecture}

\newtheorem{lemma}[theorem]{Lemma}

\newcommand{\answerCommand}{}%
  {\renewcommand{\answerCommand}{#1\\}%
   \noindent\textbf{\answerCommand}%
   }{\\}%

  {\renewcommand{\answerCommand}{#1}%
   \noindent\textbf{\answerCommand}%
   }{\\}%

\title{Drawing Graphs with Orthogonal Crossings}
\author{Radoslav Fulek\thanks{Ecole Polytechnique F\'ed\'erale de Lausanne. Email:~\texttt{radoslav.fulek@epfl.ch}}
 \and Bal\'{a}zs Keszegh\thanks{Alfr\'{e}d R\'{e}nyi Institute of Mathematics, Ecole Polytechnique F\'ed\'erale de Lausanne.
 Partially supported by grant OTKA NK 78439. Email:~\texttt{keszegh@renyi.hu}} \and
Filip Mori\'{c} \thanks{Ecole Polytechnique F\'ed\'erale de Lausanne. Email:~\texttt{filip.moric@epfl.ch}}
}
\date{}

\begin{document}

\maketitle

\thispagestyle{empty}

\begin{abstract}
By a poly-line drawing of a graph $G$ on $n$ vertices we understand a drawing of $G$ in the plane such 
that each edge is represented by a polygonal arc joining its two respective vertices.
We call a turning point of a polygonal arc the bend.
We consider the class of graphs that admit a poly-line drawing, in which each edge has 
at most one bend (resp. two bends) and any two edges can cross only at a right angle. It is shown that the number of edges
of such graphs is at most $O(n)$ (resp. $O(n\log^2 n)$).
This is a strengthening of a recent result of Didimo et al.
\end{abstract}



\section{Introduction}

A lot of research in geometric graph theory has been inspired by the problem of making good and easily readable drawings of graphs. Recent cognitive experiments showed that poly-line graph drawings with orthogonal crossings and a small number of bends per edge are equally well readable as planar drawings. Motivated by these findings, Didimo et al. in \cite{Didimo} initiated the study of the classes of graphs which admit such a drawing.

More formally, by a (simple) graph $G=(V,E)$ we understand a pair consisting of the finite
set of vertices $V$ and the finite set of edges $E$ such that $E\subseteq {V \choose 2}$.
By a multigraph $G$ we understand the graph, whose set of edges can be a multiset.

We call a turning point of a polygonal arc the \emph{bend}.
A graph $G$ belongs to the class $R_i$, $i\in \mathbb{N}_0$, if it can be represented in the plane
such that the vertices in $V$ are drawn as points, and the edges in $E$ are drawn as polygonal arcs
with at most $i$ bends joining the vertices, so that any two polygonal arcs representing edges
cross at a right angle (and not at a bend). Obviously, $R_0\subseteq R_1 \subseteq R_2 \subseteq R_3$.

In \cite{Didimo} it is shown that all graphs are in $R_3$, thus $R_i$ for $i\ge 3$ equals to $R_3$, and also they prove that $R_2\subsetneq R_3$. Moreover,
they proved that a graph on $n$ vertices belonging to the class $R_0$, $R_1$, and $R_2$, respectively,
can have at most $O(n)$, $O(n^{4/3})$, and $O(n^{7/4})$ edges. 

We significantly strengthen the above results, and thereby we show that no graphs belonging to $R_2$ have much more than linearly many edges.

\begin{theorem}
\label{thm:R1}
A graph $G$ on $n$ vertices belonging to the class $R_1$ can have at most $O(n)$ edges.
Moreover, there are infinitely many graphs in $R_1$ which do not belong to $R_0$.
\end{theorem}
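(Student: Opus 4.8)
The plan is to reduce the edge bound to a face count via a planarization and Euler's formula, with the geometric engine being a \emph{triangle-freeness} property of right-angle crossings. First I would put the drawing into generic position: by an arbitrarily small perturbation I may assume that no three edges pass through a common point, that no crossing coincides with a bend or a vertex, and (by treating components separately and ignoring isolated vertices) that the drawing is connected. Each edge then consists of at most two straight segments joined at its bend. The key lemma is a perpendicularity observation: if a straight segment $s$ is crossed by two other segments, both are orthogonal to $s$ at their crossing points, hence parallel to each other, and so they do not cross one another. Consequently there are \emph{no three segments that pairwise cross}, i.e.\ the crossing relation on the set of at most $2m$ segments is triangle-free. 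In the straight-line case $R_0$ this already forbids a triangular face of the planarization whose three corners are all crossing points: since the two consecutive arcs of such a face at each crossing corner belong to the two \emph{distinct} edges meeting there, the three bounding edges would pairwise cross, contradicting triangle-freeness.

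Next I would planarize by inserting a dummy vertex at every crossing, obtaining a connected plane graph $H$ on $n+x$ vertices ($x$ being the number of crossings) with $m+2x$ arcs, each arc carrying at most one bend. Comparing $\sum_f \mathrm{len}(f)=2(m+2x)$ with the face count $F=m+x-n+2$ from Euler's formula, one sees that the target bound $m=O(n)$ follows once the number of short faces — triangular faces, and in particular those incident to crossing vertices — is shown to be $O(n)$. Faces with no crossing corner are controlled by the planarity of the original $n$ vertices, so the troublesome objects are the triangular (and quadrilateral) faces that are incident to one or more crossing vertices.

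Bounding these is the main obstacle, and it is exactly where the single bend is spent. By the lemma, a triangular face with three crossing corners can occur only if the three edges bounding it contribute three \emph{different} segments to their three mutual crossings; as each edge has at most one bend, each edge can serve in this way for only a bounded number of such faces. The hard part will be converting this local scarcity into a global linear bound, that is, charging the offending small faces to the $n$ genuine vertices (and the at most $m$ bends, with controlled multiplicity) so that the total is $O(n)$; I expect the careful case analysis of how one bend can be shared among the crossing faces incident to it to be the most delicate step. This is also precisely the point at which a second allowed bend breaks the clean count and forces a recursive treatment, which is what produces the extra $\log^2 n$ factor in the $R_2$ bound.

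Finally, for the second assertion it suffices to produce a single graph in $R_1\setminus R_0$. Since every graph in $R_0$ has at most $4n-10$ edges \cite{Didimo}, I would exhibit an explicit one-bend orthogonal drawing of a graph whose edge count exceeds this threshold (a sufficiently dense gadget whose crossings are rendered orthogonal by using the permitted bend on each edge). Because adding isolated vertices preserves membership in $R_1$ while preserving non-membership in $R_0$ (deleting isolated vertices from an $R_0$-drawing would yield an $R_0$-drawing of the smaller graph), this one example spawns infinitely many pairwise non-isomorphic graphs in $R_1\setminus R_0$, completing the statement.
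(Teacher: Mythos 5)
Your high-level framework (planarize at the crossings, count faces via Euler, exploit the fact that two segments crossing a common segment at right angles are parallel) is sound and is indeed the skeleton of the paper's discharging proof. But the step you yourself flag as ``the hard part'' --- converting the local scarcity of bends into a global linear bound on the small faces --- \emph{is} the proof, and your sketch of it has two concrete holes. First, you never notice that the planarization is a \emph{multigraph} with faces of size two: two one-bend edges can cross twice (one crossing per pair of segments), producing lenses, and these are the genuinely hard case. For lenses local charging fails; the paper must take a drawing that \emph{minimizes the number of crossings}, rule out one type of lens by an explicit redrawing argument, and then run a \emph{second} round of discharging that routes surplus charge from a larger face (found by walking around the rotation at the lens's unique real vertex, again using redrawing/minimality to show such a face exists) into each bad lens. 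Nothing in your proposal supplies the crossing-minimality hypothesis or any substitute for it. Second, for triangles your plan is both underspecified and miscalibrated: charging bad faces to the $m$ bends ``with controlled multiplicity'' can only give an $O(m)$ count, not the $O(n)$ you claim to need --- that would still close the argument if folded into the Euler inequality (roughly $2m\le 4n-8+T+2L$ with $T\le m$), but you assert the wrong target and give no charging scheme. The paper's actual mechanism is cleaner: it first deletes the (at most $3n-6$) crossing-free edges, so that the angle-sum identity for simple polygons forces \emph{every} triangular face to contain a convex bend, and then each vertex pays $1/2$ per incident edge into the face containing that edge's convex bend; multiplicity is automatic because each edge has one bend whose convex side lies in a single face. (Minor additional flaw: your opening ``small perturbation'' is not available, since perturbing destroys the right angles; luckily the degeneracies you want to remove are either excluded by the definition of $R_1$ --- crossings are not at bends --- or impossible, as three concurrent pairwise-orthogonal segments cannot exist.)

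For the second assertion, your reduction is logically fine (one graph in $R_1\setminus R_0$ plus isolated vertices yields infinitely many, since isolated vertices affect membership in neither class), but you never actually produce the one example --- ``a sufficiently dense gadget'' is a hope, not a construction. The paper exhibits it explicitly: a hexagonal lattice with six diagonals per hexagon, the diagonals drawn with one bend each by erecting isosceles right triangles on the sides and prolonging the catheti, which gives $4.5n-O(\sqrt n)$ edges and hence, for large $n$, exceeds the $4n-8$ bound for $R_0$ (Theorem~\ref{thm:R0}). So both halves of your proposal are plans with the decisive steps missing: the lens analysis (with its reliance on a crossing-minimal drawing) in the first half, and the explicit dense construction in the second.
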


\begin{theorem}
\label{thm:R2}
A graph $G$ on $n$ vertices belonging to the class $R_2$ can have at most $O(n\log^2n)$ edges.
\end{theorem}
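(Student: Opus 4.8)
The plan is to prove the bound by a balanced-separator recursion, where the whole difficulty concentrates in controlling the edges that meet the cut. Throughout I fix an $R_2$-drawing of $G$: every edge is a polygonal arc of at most three straight segments, and every crossing of two segments is a right angle. The one structural fact I lean on is an immediate consequence of orthogonality: two segments can cross \emph{only} if they are perpendicular, so two segments whose directions make an acute or zero angle never cross at all. After a generic rotation the line of the cut is vertical; classifying each segment that crosses this line by the sign of its slope, two segments in the same slope-sign class are never perpendicular and hence are pairwise non-crossing. This ``same sign $\Rightarrow$ no crossing'' principle is the engine of the argument.

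First I would set up the outer recursion. Choose a vertical line $\ell$ whose $x$-coordinate splits the vertex set into two parts of sizes $n_1,n_2 \le n/2$. Edges with both endpoints strictly on one side are recursed upon within that side; the set $X$ of edges meeting $\ell$ is counted separately and discarded. If the \emph{interface bound} $|X| = O(n\log n)$ holds for every balanced vertical line, then the number of edges satisfies $T(n) \le T(n_1) + T(n_2) + O(n\log n)$ with $n_1 + n_2 = n$ and $n_1,n_2 \le n/2$, which solves to $T(n) = O(n\log^2 n)$. The outer logarithm is the depth of this recursion, and the second logarithm is the one hidden inside the interface estimate; this is exactly the ``two logs for two bends'' phenomenon that distinguishes $R_2$ from the linear class $R_1$ of Theorem~\ref{thm:R1}.

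The heart of the proof, and the step I expect to be the main obstacle, is thus the interface bound: an $R_2$-graph has $O(n\log n)$ edges meeting a fixed line $\ell$. Each edge of $X$ meets $\ell$ in at most three points (one per segment), so up to a constant factor I may charge each such edge to a single crossing segment transversal to $\ell$, and I split $X$ into its positive-slope and negative-slope classes. By the principle above, the crossing segments inside one class are pairwise non-crossing and all meet $\ell$, so they can be linearly ordered by their intersection points along $\ell$ and behave like a system of non-crossing chords; consequently \emph{all} crossings inside $X$ occur between the two classes, and the crossing structure of the interface is essentially bipartite. The subtlety that forces a logarithm—and which rules out the naive estimate—is that the endpoints of these crossing segments are bends, not vertices, so the non-crossing family is not automatically planar on the vertex set. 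To extract $O(n\log n)$ I would run a second, perpendicular balanced recursion along $\ell$: cut the ordered crossing points in half by a line orthogonal to $\ell$, observe that truncating the interface edges at this secondary cut leaves arcs with one fewer available bend that respect orthogonality, and feed the resulting configuration into the linear bound of Theorem~\ref{thm:R1}. Each level of this secondary recursion costs $O(n)$ interface edges over $O(\log n)$ levels, producing the extra factor.

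The remaining obstacles are bookkeeping ones that still require care. An edge with two bends may meet $\ell$ up to three times and may contribute segments to \emph{both} slope classes, so the charging to a single representative crossing segment must be made consistent so as to lose only a constant factor. The reduction of truncated interface edges to an honest $R_1$-drawing (genuine vertices, at most one bend, right-angle crossings preserved) must be arranged explicitly rather than merely pictured, since that is what licenses the appeal to Theorem~\ref{thm:R1}. Degenerate directions—segments exactly parallel or perpendicular to a cut—are disposed of by the generic rotation, and the large families of mutually non-crossing segments within one slope class, which carry no crossing information, are grouped and bounded by a direct Euler/planarity count. Once the interface bound $|X| = O(n\log n)$ is secured, the two recursions compose to give the claimed $O(n\log^2 n)$.
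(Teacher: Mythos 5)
Your outer recursion skeleton and the recurrence arithmetic are fine, and they parallel the paper's divide-and-conquer; the entire weight of the theorem therefore rests on your \emph{interface bound} $|X|=O(n\log n)$, and that is precisely where the proposal has a genuine gap. The secondary recursion you sketch does not establish it. First, truncating a two-bend arc at the secondary cut does \emph{not} leave ``arcs with one fewer available bend'': cutting a polygonal arc with two bends at one point yields pieces whose bend counts split as $(0,2)$, $(1,1)$ or $(2,0)$, so a piece may retain both bends. More fatally, the truncation points are bends or cut points, not vertices, so the truncated configuration is not a graph drawing on any vertex set you control; to invoke Theorem \ref{thm:R1} you would have to promote up to $|X|$ truncation points to genuine vertices, and then its linear bound returns $O(n+|X|)$ --- circular. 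Second, the claim that each level of the secondary recursion ``costs $O(n)$ interface edges'' is asserted with no charging mechanism: your (correct) observation that two segments of the same slope sign are never perpendicular and hence never cross gives a non-crossing family along $\ell$, but, as you yourself note, since the family's endpoints are bends no Euler/planarity count applies, and nothing in the proposal extracts a per-level linear charge. So the central lemma --- which is of theorem-level difficulty, not bookkeeping --- remains unproven, and I do not see that it is even known to be true in the form you need.

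For comparison, the paper obtains the per-level cut bound by an entirely different route that supplies exactly what you are missing. It never bounds edges through a geometric line; instead, Lemma \ref{lemma:oneEdge} bounds the crossing number: splitting each edge into end and middle segments, all middle--middle crossings are destroyed by deleting at most half the edges (the conflict graph is bipartite, because crossing directions must differ by exactly $\pi/2$ --- a close cousin of your slope-sign principle), end--end crossings number at most $2\binom{n}{2}$ by Thales' theorem, and end--middle crossings at most $2mn$ since an end segment at $v$ crossing the middle segment of $e$ perpendicularly must pass through the foot of the perpendicular from $v$. This gives $cr(G)=O(nm+n^2)$, and the Pach--Shahrokhi--Szegedy inequality (Theorem \ref{thm:bw}) then yields a balanced cut of size $b(G)=O(\sqrt{nm})$, which at the inductive scale $m\le cn\log^2 n$ is $O(n\log n)$ --- the same per-level cost you wanted for $|X|$, but with a proof. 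If you wish to salvage your outline, the cleanest repair is to replace the geometric interface lemma by this crossing-number/bisection-width bound; your slope-sign observation then survives as the bipartiteness step that eliminates middle--middle crossings.
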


Moreover, we conjecture that the gap between $R_2$ and $R_3$ is even bigger:

\begin{conjecture}
\label{conj:R2}
A graph  $G$  on $n$ vertices belonging to the class $R_2$ can have at most $O(n)$ edges.
\end{conjecture}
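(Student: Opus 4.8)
The plan is to prove the linear bound by a charging argument on the planarized drawing, exploiting the rigid local geometry forced by orthogonal crossings together with the scarcity of bends. First I would record the basic structural fact underlying everything: since any two edges meet only at right angles, two crossing segments are perpendicular, so the relation ``parallel or perpendicular'' partitions all segments of the drawing into slope-classes $\{\theta,\theta+\pi/2\}$, and crossings occur only between two segments of the same class and of perpendicular directions. After a rotation each class is therefore an ordinary orthogonal (horizontal/vertical) arrangement, and the whole drawing is an overlay of such arrangements, with each edge threading through at most three of them, one per segment. The crossing-free edges form a planar subgraph and number at most $3n-6$, so it suffices to bound the number of crossed edges by $O(n)$.

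Next I would set up the counting engine. Form the planar graph $H$ obtained by planarizing the drawing: its nodes are the $n$ vertices, the at most $2|E|$ bends, and the crossing points (each of degree four), and its arcs are the maximal crossing-free pieces of segments. After arguing that the number of crossings between any fixed pair of edges may be assumed to be $O(1)$ — which I would justify by a local rerouting that stays in $R_2$ and does not increase the vertex count — Euler's formula $|V_H|-|E_H|+|F_H|=2$ relates the number of crossings, bends, and faces. The aim is a discharging scheme in which every original vertex ultimately retains only $O(1)$ charge while every crossed edge is forced to receive $\Omega(1)$. The two geometric inputs that should make this work are (i) that each crossing is a genuine right angle, which forbids the degenerate small faces a loose bound would tolerate, and (ii) that each edge bends at most twice, so the total ``turning budget'' of the drawing is only $2|E|$; combining these should yield a linear inequality in $|E|$.

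The crux — and the reason the theorem in this paper gives only $O(n\log^2 n)$ — lies in the long parallel bundles. Within one slope-class a family of near-parallel crossed edges can run alongside one another while all crossing a common transversal bundle, producing a grid of degree-four crossing nodes and quadrilateral faces that charge to no original vertex. The $\log^2$ loss comes precisely from handling such bundles by a two-level divide-and-conquer, one logarithmic factor per coordinate direction. To reach a linear bound I would instead amortize these bundles against the bend budget: I expect to show that a bundle of $k$ mutually parallel crossed edges, none of which may bend inside the bundle without leaving $R_2$, must spend a bend or an endpoint at a rate proportional to $k$, so that the $2|E|$ bends and $2n$ edge-endpoints cap the total bundle size. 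Making this rate argument precise — ruling out the deeply nested, interleaved staircase routings that evade a naive per-bundle charge — is the main obstacle, and I would attack it by isolating a small forbidden configuration of segments and establishing a Turán-type linear bound for its avoidance in the ordered structure induced by a single slope-class.
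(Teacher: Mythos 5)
First, a point of orientation: the statement you are proving is Conjecture~\ref{conj:R2}, which the paper does \emph{not} prove. The paper establishes only the weaker bound $O(n\log^2 n)$ (Theorem~\ref{thm:R2}) via Lemma~\ref{lemma:oneEdge} and the bisection-width recursion, and its Remarks merely speculate that a discharging argument might yield linearity. So your proposal cannot be measured against a paper proof; it must stand alone, and it does not. The entire difficulty of the conjecture is concentrated in the step you yourself flag as open: the claim that a bundle of $k$ parallel crossed edges must ``spend a bend or an endpoint at a rate proportional to $k$,'' to be established by an unspecified Tur\'an-type forbidden-configuration lemma. No candidate forbidden configuration is exhibited and no extremal bound is proved, so at the crux you have reformulated the conjecture rather than proved it. The surrounding scaffolding is fine but not load-bearing: the slope-class decomposition is a correct observation, and your proposed rerouting lemma is unnecessary, since each edge consists of at most three segments and two segments cross at most once, so any pair of edges crosses at most $9$ times automatically.

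Second, and more seriously, the amortization scheme is circular even if the rate lemma were granted. Your budget is $2|E|$ bends plus $2n$ endpoints; charging each crossed edge $\Omega(1)$ against a budget of size $O(|E|+n)$ yields only $m\leq O(m+n)$, which is vacuous, because the bend budget scales with the very quantity being bounded. For a linear bound the charge must ultimately be absorbed by a resource of size $O(n)$ --- the vertices --- as in the proof of Theorem~\ref{thm:R1}, where each vertex $v$ retains charge at least $\frac{1}{2}d_v-4$ and the small faces are paid from vertex and bend charge in a way that is accounted per incidence, not per edge globally. The obstruction to extending that scheme to $R_2$ is exactly the crossings between two middle segments (type~(iii) in Lemma~\ref{lemma:oneEdge}): these can create triangles and lenses bounded entirely by bend-free portions of middle segments, with no nearby vertex or convex bend to charge, which is why the paper resorts to deleting half the edges to destroy them and still only obtains $cr(G)=O(nm+n^2)$. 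Finally, your diagnosis of the $\log^2$ loss is inaccurate: it does not arise as ``one logarithmic factor per coordinate direction,'' but from the divide-and-conquer recursion with cut size $b(G)=O(\sqrt{mn})$, where the inductive ansatz $cn\log^2 n$ is what makes inequality~(\ref{eqn:lll}) close; this misreading does not by itself invalidate your plan, but it suggests the bundle picture is not where the superlinear slack in the paper's argument actually lives.
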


A recent paper of Dujmovi\'{c} et al. \cite{Dujmovic} treats a similar question. They proved
an upper bound  on the number of edges in a geometric graph (i.e. its edges are represented by
straight-line segments), in which every pair of edges cross at an angle at least $0<\alpha\leq \pi/2$ for some fixed $\alpha$.

Throughout the paper let $G=(V,E)$ denote a simple graph on $n$ vertices, having $m$ edges.
If $v\in V$, we let $d_v$ denote the number of edges $e$ in $E$ incident to $v$, i.e. the number 
of edges $e$ such that $v\in e$, or shortly the degree of $v$.
By a drawing of a graph $G$ in the plane we understand a representation of the graph in the plane such that
the vertices of $G$ are represented by points, and each edge is represented by a Jordan arc
connecting two points corresponding to its two vertices.

 If it leads to no confusion,
we will refer to the vertices and edges in $G$ also as to the objects that represent them in the drawing.

By a  {\it crossing} of the two edges $e$ and $e'$ in a drawing  of $G$ we understand a point distinct
from the endpoints of $e$ and $e'$ in the intersection $e\cap e'$. By a {\it plane graph} we understand a graph drawn in the plane
without any (edge) crossings.
The \emph{crossing number} $cr(G)$ of a graph $G$ is defined as the minimum number of crossings in a drawing of $G$ over all possible drawings of $G$ in the plane. 
We denote by $F$ the set of faces
of the plane graph $G$. If $f\in F$, we let $d_f$ denote the number of edges on the boundary of $f$, i.e. the size of $f$.
We call a face of size two a {\it lens} (in case of multigraphs), of size three a {\it triangle}, and of size four a {\it quadrangle}.

By a {\it rotation system} at a vertex $v$ of $G$ in a planar representation of $G$ we understand the circular order in which the edges leave
$v$. By a {\it wedge} at a vertex $v$ of $G$ in a planar representation of $G$ we understand a pair of edges $(e,e')$ incident to $v$ that
are consecutive in its rotation system. A face $f$  of a plane graph $G$ contains a wedge $(e,e')$, $e,e'\in E$, if $f$ contains $v$, $e$ and $e'$
on the boundary. Note that a wedge is contained only in one face except when $v$ has degree $2$.

The \emph{bisection width} of $G$ we define as $$b(G)=\min_{|V_1|,|V_2|\leq2n/3}|E(V_1,V_2)|,$$
where the minimum is taken over all partitions $V(G)=V_1\cup V_2$ such that $|V_1|,|V_2|\leq2n/3$ and 
$E(V_1,V_2)$ denotes the set of edges with one endpoint in $V_1$ and the other endpoint in $V_2$.

\section{Discharging}

The method of discharging was apparently introduced in \cite{Wernicke}.
However, it grabbed a considerable attention only after 
it was extensively used in the first valid proof of the famous Four Color Theorem \cite{Appel2}.
Since then it was successfully applied to obtain various types of results
in the structural graph theory, see e.g. \cite{Hlineny}.
Our application of this method reminds that of \cite{Ackerman}.

In order to illustrate our approach we give a short proof of a result that is only slightly weaker
than a result in \cite{Didimo},  which states that a graph on $n$ vertices in $R_0$ can have at most $4n-10$ edges.
Almost the same proof was given quite recently in \cite{Dujmovic}.

\begin{theorem}
\label{thm:R0}
A graph $G$ on $n$ vertices belonging to the class $R_0$ can have at most $4n-8$ edges.
\end{theorem}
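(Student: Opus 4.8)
The plan is to pass to the planarization and combine Euler's formula with a single geometric consequence of the right-angle condition. First I would fix a drawing of $G$ witnessing $G\in R_0$ and assume it is in general position (no three segments through a common point, no crossing lying on a vertex), so that replacing each crossing by a new \emph{dummy} vertex yields a genuine plane graph $H$; write $c=cr(G)$ for the number of crossings. Then $H$ has $n+c$ vertices and $m+2c$ edges, every dummy vertex has degree exactly $4$, and since $G$ is simple with straight edges one checks that $H$ has no face of size $1$ or $2$ (a bigon would force two edges to share a supporting line), so $d_f\ge 3$ for every face $f$. Euler's formula then gives, after the routine count $\sum_v(d_v-4)=2m-4n$ over the vertices of $H$ (the dummy vertices contributing $0$), the identity $\sum_{f\in F}(d_f-4)=4n-2m-8$. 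Since $d_f-4\ge -1$ with equality exactly for triangular faces, this yields $2m\le 4n-8+T$, where $T$ denotes the number of triangular faces of $H$.

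The heart of the argument, and the only place where the right-angle hypothesis really enters, is the following claim: every triangular face of $H$ has at most one dummy vertex among its three corners. Indeed, each face is bounded by straight sub-segments, and at a dummy vertex the two crossing edges are perpendicular, so the face occupies one of four quadrants and has interior angle exactly $\pi/2$ there. A non-degenerate straight-line triangle has angle sum $\pi$, so it cannot afford two right angles; hence at most one corner is a dummy vertex, i.e.\ every triangular face carries at least two original vertices on its boundary.

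I would then finish by a discharging/double-counting step: counting incidences between triangular faces and their original corners, each triangle is counted at least twice, while each original vertex $v$ is a corner of at most $d_v$ faces, so $2T\le\sum_{v\in V}d_v=2m$, that is $T\le m$. Substituting into $2m\le 4n-8+T$ gives $m\le 4n-8$, as desired. The main obstacle is isolating the correct geometric lemma and checking that it is robust: one must verify that a triangular face really does see a full $\pi/2$ angle at every dummy corner (consecutive edges in the rotation at a degree-$4$ crossing are perpendicular, never collinear), and that the general-position reductions eliminating small faces are legitimate; once this is in hand the combinatorics is just the short Euler-plus-counting computation above. As a sanity check on why the triangle analysis is needed, a cruder split of the edges into two classes according to whether the slope direction lies in $[0,\pi/2)$ or $[\pi/2,\pi)$ shows that each class is plane (two edges of the same class are never perpendicular), giving only the weaker bound $m\le 6n-12$.
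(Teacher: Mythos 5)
Your proof is correct and takes essentially the same route as the paper's: planarize the drawing, apply Euler's formula with charges $d_v-4$ and $d_f-4$, use the key geometric fact that a straight-line triangular face cannot have two right angles (so every triangle has at least two original vertices on its boundary), and then bound the damage from triangles via the incidence count at original vertices. Your double-counting inequality $2T\le\sum_{v\in V}d_v=2m$ is just the paper's discharging of $1/2$ from each original corner of a triangle written in summed form, so the two arguments coincide.
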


\begin{proof}
Fix a drawing $D$ of $G$ in the plane witnessing its membership in $R_0$. We denote by $G'$ the plane
graph which is naturally obtained from $D$ by introducing a new vertex instead of each edge crossing.
Thus, $G'=(V'=V\cup C, E')$ is a plane graph such that $C$ is the set of edge crossings in $D$,
and $E'$ consists of the crossing-free edges in $D$ and the edges which are obtained by
subdividing the other (crossing) edges using the crossing points. We put the charge $ch(v)=d_v-4$
on each vertex $v$ in $V'$, and the charge $ch(f)=d_f-4$ on each face in $F'$, where $F'$ is the set of faces of $G'$.
By Euler's formula the total sum of the charges is:
\begin{equation}
\label{eqn:mainSum}
\sum_{v\in V'}ch(v)+\sum_{f\in F'}ch(f)=-8.
\end{equation}
Indeed, $\sum_{v\in V'}(d_v-4)+\sum_{f\in F'}(d_f-4)= 2|E'|-4|V'|+2|E'|-4|F'|=-8$.

Moreover, as the charge at a vertex in $C$ is 0, we have:
\begin{equation}
\label{eqn:mainSum2}
\sum_{v\in V}ch(v)+\sum_{f\in F'}ch(f)=-8.
\end{equation}

In what follows we redistribute the charge in $G$ from some vertices in $V$ to some faces in $F'$, so that
all the faces have non-negative charge, and the charge left at any vertex is not "very" low.
 We maintain the total sum of the charge in the graph
unchanged.
 
Since every face in $F'$ of size at least $4$ receives a non-negative charge, 
it is enough to take care of the triangles (which initially have charge $-1$).
By the fact that all edges are represented by straight line segments, and every pair of them can cross
only at a right angle, each triangle $f$ must contain at least two vertices $u,v\in V$ on the boundary.
We discharge 1/2 of the charge at $u$ and $v$ to $f$, thereby making the charge of $f$ equal to 0.
It's easy to see that after doing this for every triangle the charge left at any vertex $v$ is still at least 
$d_v-4-\frac{1}{2}d_v=\frac{1}{2}d_v-4$. Let $ch'(v)$ and $ch'(f)$ denote the charge at each vertex $v\in V$ and 
$f\in F'$ after previously described redistribution. We have:
\begin{equation}
\label{eqn:proof}
m-4n=\sum_{v\in V}(\frac{1}{2}d_v-4)\leq\sum_{v\in V}ch'(v)\leq \sum_{v\in V}ch'(v)+\underbrace{\sum_{f\in F'}ch'(f)}_{\geq 0}=-8.
\end{equation}
By reordering the terms in (\ref{eqn:proof}) the result follows.
\end{proof}

\section{Proof of Theorem \ref{thm:R1}}

\begin{figure}[h]
\centering
\subfigure[]{\includegraphics[scale=0.55]{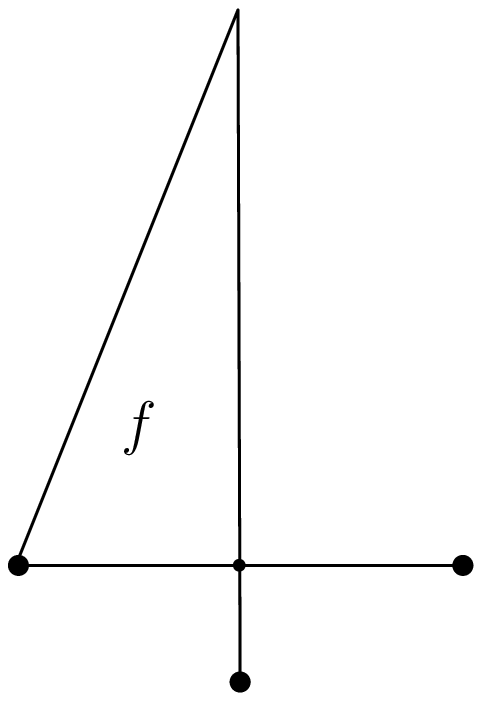}
	\label{fig:badlens3}  \hspace{5mm} 
	}
  \subfigure[]{\label{fig:badlens}     
		\includegraphics[scale=0.55]{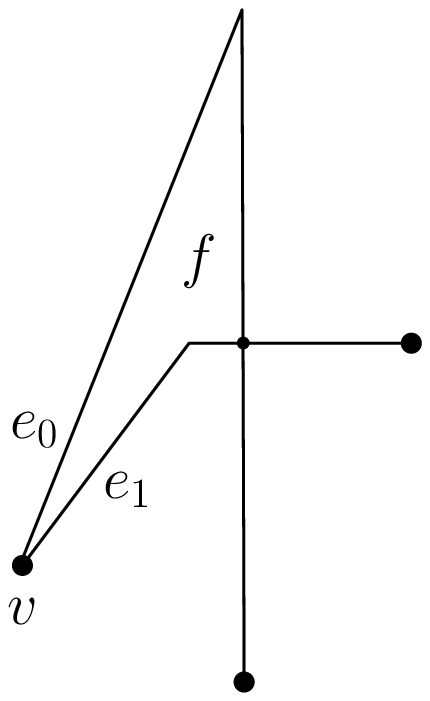}  \hspace{5mm} 
      }
      \subfigure[]{\includegraphics[scale=0.55]{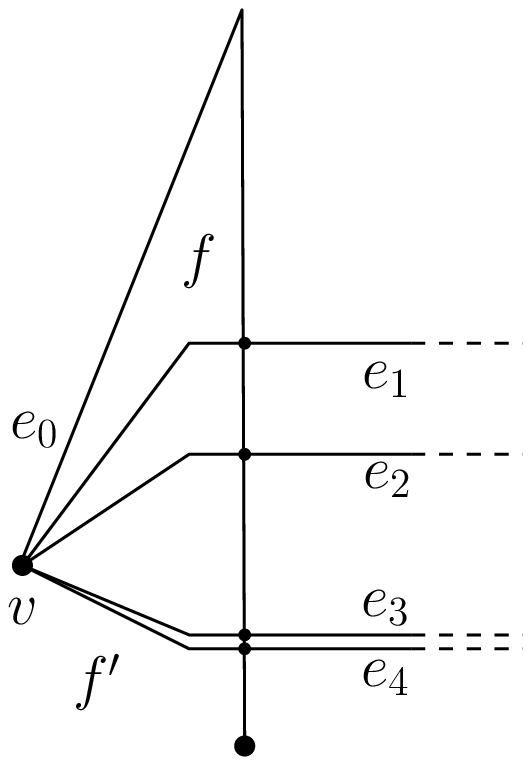}  \hspace{5mm} 
	\label{fig:badlens2}
	}
	\subfigure[]{\includegraphics[scale=0.55]{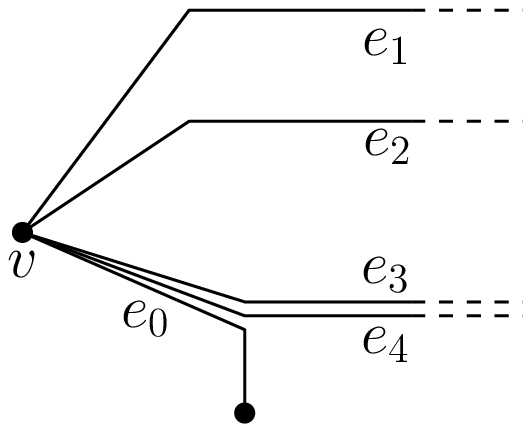}
	\label{fig:redrawing}
	}
	
	 \caption{(a) lens $f$ that can be redrawn,
(b) lens $f$ having only one convex bend on its boundary, (c) situation when $G$ could be redrawn with less crossings and (d) its redrawing ($i$=4)} 
\end{figure}

\begin{proof} [Proof of Theorem \ref{thm:R1}]
Fix a drawing $D$ of $G$ in the plane witnessing its membership to $R_1$, which minimizes the number of edge crossings.
Let $G_0=(V,E_0)$ and $D_0$ denote a graph and its drawing in the plane, respectively, which is obtained from $D$ by deleting all the crossing-free edges. Since the deleted edges form a plane drawing, there are at most $3n-6$ of them.
Similarly, as in the proof of Theorem \ref{thm:R0} we denote by $G'_0=(V_0', E_0')$ a plane multigraph, which is naturally obtained from $G_0$ by introducing
the vertices that were the crossings in $D_0$. Let $F_0'$ denote the set of faces of $G_0'$.

Following the line of thought of the proof of Theorem \ref{thm:R0} we put a charge $ch(v)=d_v-4$ and $ch(f)=d_f-4$ at each vertex $v$
 in $V_0'$ and face $f$ in $F_0'$,
 respectively. We have:
 \begin{equation}
\label{eqn:mainSum}
\sum_{v\in V}ch(v)+\sum_{f\in F_0'}ch(f)=-8.
\end{equation}

We redistribute the charge in $G_0'$ in two steps: first, from some vertices to some faces, and second, from some faces to other faces, so that in the end all the faces have non-negative charge. 

At the first step, for each pair $(v,e)$, where $v\in V$, $e\in E_0$ and $e$ is incident to $v$, we 
 discharge $1/2$ units of the charge from $v$ to the face $f\in F_0'$ that contains the
 convex bend of the edge $e$ (or the edge $e'\in E_0'$ that is a part of the edge $e$ in $G_0'$) on the boundary.
Observe that after this redistribution at each vertex $v\in V$ the new charge is $ch'(v)\geq\frac{1}{2}d_v-4$.

Since every face in $F'$ of size at least $4$ receives a non-negative charge already at the beginning, 
it is enough to take care of the triangles and lenses (the faces of size 2), which initially had charge $-1$ and $-2$, respectively.

We claim that after the first step of discharging all triangles have non-negative charges.
Indeed, by the fact that the sum of the inner angles in a simple closed polygon on $k$ vertices equals to $(k-2)\pi$,
and by the fact that every edge of $G_0$ participate in a crossing,
any triangle in $F_0'$ must contain a convex bend on its boundary. Thus, we  discharged to any triangle $f$ the charge
of 1/2, from both endpoints of any edge that creates a convex bend on $f$, thereby setting the new charge $ch'(f)$ at $f$ to a non-negative number.


Also, 
all lenses that contain two convex bends on its boundary obviously have non-negative charges (in fact, their new charge is 0).

We are left with the case of lenses
that contain at most one convex bend on their boundary. We claim that any such lens must look like the lens in  Figure \ref{fig:badlens}. 
Indeed, the other possible drawing of a lens with only one convex bend on the boundary (see Figure \ref{fig:badlens3})
could be easily redrawn so that we reduce the total number of crossings in the drawing $D$ (contradiction).

In order to set charge of the lenses with at most one convex bend we do the second step of discharging. Since in the first step we added charge to some faces of size at least 4 (which was unnecessary), we can now use that 'wasted' charge for the lenses. 

Let $f$ be a lens with at most one convex bend. Note that $f$ contains precisely one vertex $v$ from $V$ on its boundary.
Let $e_0,e_1,\ldots e_{d_v-1}$ denote the edges incident to $v$ listed according to the rotation system at $v$ (clockwise) so that
the wedge $(e_0,e_1)$ is contained in $f$ and $e_1$ creates the concave bend on $f$.
 Let $i\geq 1$ denote the minimum $i$ such that the wedge $(e_i,e_{i+1})$ is not contained
in a triangle (hereafter indices are taken modulo $d_v$). It is easy to see that $i$ is well-defined, as there exist no crossing-free 
edges in $G_0$. If $(e_{i-1},e_{i})$ is contained in a triangle that
has a convex bend created by $e_{i}$ on the boundary or does not have a bend created by $e_{i}$ on the boundary,
  we could redraw $G_0'$ and thereby reduce
 the number of crossings of $G$, a contradiction (see Figures \ref{fig:badlens2},\ref{fig:redrawing} for illustrations on how to redraw $G_0'$).

Thus, the wedge $(e_i,e_{i+1})$ is contained in a face $f'\not=f$ having a convex bend created by $e_i$ on the boundary. Moreover,
$d_{f'}>3$. Hence, we can use this bend to charge 1 to $f$ in order to make the charge $ch'(f)$ equal to 0.
It is easy to see that the situation, when the unused charge from the bend on $e_i$ is used for more than one lens, cannot happen. 

Finally, after the second series of redistributions, all the faces have non-negative charge.

Thus, by the same calculation as in (\ref{eqn:proof}), we get $|E_0|\leq 4n-8$, and that in turn implies that $$|E|\leq 4n-8+3n-6=7n-12.$$

We can complement the upper bound on the number of edges of a graph in $R_1$ by  constructing  infinitely many graphs 
belonging to $R_1$ and having $4.5n-O(\sqrt{n})$ edges. Our construction is a hexagonal lattice with 6 diagonals in each hexagon, see Figure \ref{fig:hexagonalium}.
The diagonals are obtained by erecting an isosceles right-angled triangle above each side of a hexagon and prolonging  the catheti.
Thus, by Theorem \ref{thm:R0} for infinitely many $n$ there is a graph on $n$ vertices belonging to $R_1$ and not to $R_0$.
\end{proof}
\begin{figure}
\centering
\includegraphics[scale=0.5]{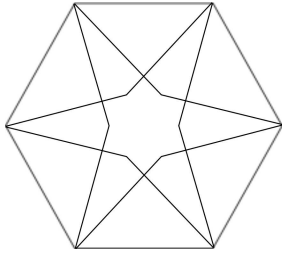}
\caption{Hexagon with diagonals} 
\label{fig:hexagonalium}
\end{figure}

\section{Proof of Theorem \ref{thm:R2}}

Contrary to the proof from the previous section we derive  the bound on the number of edges in a graph $G$ in  $R_2$ by
using a divide-and-conquer method based on Theorem \ref{thm:bw} \cite{Pach} below, which states that the bisection width of $G$
can be bounded from above in terms of the crossing number of $G$. 

For applying the divide-and-conquer strategy we bound from above the crossing number.

\begin{lemma}
\label{lemma:oneEdge}
If $G$ is a graph in $R_2$, then $cr(G)=O(nm+n^2)$, where $n$ is the number of vertices of $G$, and $m$ is the number of edges.
\end{lemma}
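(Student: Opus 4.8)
The plan is to bound the number of crossings in a single well-chosen drawing, which suffices since $cr(G)$ is the minimum over all drawings. First I would fix a drawing $D$ of $G$ witnessing $G\in R_2$ that \emph{minimizes the number of crossings among all $R_2$-drawings of $G$}; it is enough to bound the crossings of $D$. Each edge has at most two bends, so it splits into at most three straight segments, giving at most $3m$ segments in total. Every crossing is a transversal intersection of two segments that meet at a right angle, so the two crossing segments are \emph{perpendicular}. The one structural consequence I will exploit throughout is immediate: all segments that cross a fixed segment $s$ are perpendicular to $s$, hence \emph{mutually parallel}.

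\textbf{Key claim: each segment is crossed $O(n)$ times.}
Fix a segment $s$ and, after rotating, assume $s$ is horizontal; the crossing segments are then all vertical. List them as $t_1,\dots,t_k$ in the order in which they meet $s$. For two consecutive crossing segments $t_j,t_{j+1}$ I would consider the region bounded by $t_j$, $t_{j+1}$, and the piece of $s$ between their crossing points. Using crossing-minimality together with the local redrawing moves developed in the proof of Theorem~\ref{thm:R1}, I would argue that unless some vertex of $G$ lies between $t_j$ and $t_{j+1}$, one of the two parallel edges carrying $t_j,t_{j+1}$ can be rerouted along (a neighborhood of) the other so as to remove its crossing with $s$ without introducing new crossings, while keeping all crossings orthogonal and the bend count at most two. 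This contradicts minimality, so consecutive crossings along $s$ must be separated by distinct vertices. Since there are only $n$ vertices, this forces $k = O(n)$.

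\textbf{Assembling the bound.}
Granting the claim, every one of the at most $3m$ segments carries $O(n)$ crossings. Summing over all segments and dividing by two (each crossing lies on two segments) yields a total of $O(nm)$ crossings in $D$, and therefore $cr(G)=O(nm)$, which is comfortably within the asserted $O(nm+n^2)$. The extra additive $O(n^2)$ term leaves room for the coarser accounting needed at the places where the separation argument above is vacuous, namely crossings occurring in the immediate vicinity of an endpoint or a bend of $s$; these amount to only $O(1)$ per segment and contribute a lower-order term.

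\textbf{Main obstacle.}
The crux is the rerouting step inside the Key Claim: one must show that two consecutive parallel segments crossing $s$ with no vertex between them admit a \emph{crossing-reducing} redrawing that stays in $R_2$. This is delicate for two reasons. First, moving the inner edge perturbs its own bends and the angles at which it meets other edges, so one has to verify that the move is genuinely local, that it strictly decreases the crossing count, and that orthogonality of all crossings and the bound of two bends per edge are preserved. Second, one must rule out that a \emph{bend} (rather than a vertex) acts as an essential obstruction between $t_j$ and $t_{j+1}$, since otherwise the separation count would involve the $O(m)$ bends rather than the $O(n)$ vertices; here I expect to reuse the observation that an ``empty'' bigon-like region bounded by the two parallel segments can be shortcut even when its corner is a bend, exactly as the lens- and triangle-removal arguments in the proof of Theorem~\ref{thm:R1} eliminate configurations that a crossing-minimal $R_1$-drawing cannot contain.
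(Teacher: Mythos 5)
Your proposal rests on the Key Claim that in a crossing-minimal $R_2$-drawing any segment $s$ is crossed only $O(n)$ times, with a vertex separating consecutive crossings; this claim is unproven, and the rerouting mechanism you sketch for it cannot work. Both $t_j$ and $t_{j+1}$ cross $s$ by assumption, so rerouting the edge carrying $t_j$ along a neighborhood of $t_{j+1}$ does not remove its crossing with $s$ --- the crossing merely slides along $s$. Removing a \emph{single} transversal crossing between two edges is never a local move: the rerouted edge must go around an endpoint of the edge containing $s$, a global change that can create many new (and non-orthogonal) crossings and extra bends. The redrawing arguments from the proof of Theorem~\ref{thm:R1} that you invoke are of a different nature: they eliminate configurations (lenses, triangles with prescribed bends) bounded by \emph{two} crossings or by empty faces, and they do not apply to a single crossing pair. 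Moreover, the obstacle you yourself flag is fatal rather than delicate: between $t_j$ and $t_{j+1}$ the obstructions are in general bends, of which there are $\Theta(m)$, so even a correct separation argument would give $O(m)$ crossings per segment and only $O(m^2)$ in total. Indeed, two perpendicular ``buses'' of middle segments form a grid in which one middle segment is crossed $\Theta(m)$ times with no vertex between consecutive crossings and no evident crossing-reducing move; nothing in your argument rules this out for crossing-minimal drawings. Had your claim been provable, it would yield $cr(G)=O(nm)$ including all middle--middle crossings, which is strictly stronger than what the paper establishes --- a warning sign.

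The paper's actual proof sidesteps exactly this difficulty. It classifies crossings by segment type (end or middle segment of an edge) and does \emph{not} bound middle--middle crossings at all: instead it deletes at most half of the edges to destroy all of them, using that the auxiliary graph $G_M$ on $E(G)$, joining two edges whose middle segments cross, is bipartite (crossing middle segments are perpendicular, so an odd cycle in $G_M$ is impossible). The remaining crossings are bounded directly by perpendicularity: end--end crossings number at most $2\binom{n}{2}$, since for each vertex pair $u,v$ a right-angle crossing of end segments at $u$ and $v$ lies on the Thales circle over $uv$ and at most two such crossings can occur; and end--middle crossings number at most $2mn$, since all end segments at a fixed vertex $v$ crossing a fixed middle segment would have to be parallel lines through $v$, so there is at most one per (edge, vertex) pair. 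This gives the $O(nm+n^2)$ bound --- note in particular that the additive $O(n^2)$ term is the Thales end--end count, not, as you suggest, boundary effects near bends and endpoints. To repair your approach you would need either a proof of the separation claim that handles bends and parallel bundles, or the paper's deletion trick; as written, the Key Claim is a genuine gap.
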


\begin{proof} 
Let us fix a drawing $D$ of $G=(V,E)$, which witnesses its membership to the class $R_2$.
For an $e\in E$, we distinguish two types of line segments, which $e$ consists of, the end segment and the middle segment.
Naturally, the end segment is a segment containing an endpoint of $e$, and the middle segment is the segment, which does not contain
an endpoint. Obviously, $e$ contains at most two end segments and one middle segment.

Without loss of generality we can assume that in $D$ no edge crosses itself.
We distinguish three types of crossings:

\begin{enumerate}[(i)]
\item
between two end segments,
\item
between an end segment and a middle segment,
\item
between two middle segments.
\end{enumerate}

First, we show that by deleting at most half of the edges we can destroy all the crossings of the type (iii).
We construct a bipartite graph $G_M$ whose set of vertices is the set of edges of $G$ and two edges of $G_M$ are joined by an edge,
if they give rise to a crossing of the type (iii). Clearly, by deleting from $G$ the edges corresponding to the smaller of two parts, into which the vertices
of $G_M$ are divided, we remove all the crossings of the type (iii) from $G$. Thus, we can assume that in $G$ there are no crossings of the type (iii).

There are at most $2{n \choose 2}$ crossings of the type (i), since by Thales' theorem for any pair $u,v\in V$ there are at most two such crossings
between end segments incident to $u$ and $v$. Finally, we can also easily put an upper bound $2mn$ on the number of crossings of the type (ii). Indeed, for a given
edge $e$ and a vertex $v$ there can be at most one crossing between the middle segment of $e$ and an end segment incident to $v$.

\end{proof}

The following theorem from \cite{Pach}, which can be found also  in \cite{Pach2}, allows us to apply a divide-and-conquer strategy to tackle our problem.
Its proof combines a weighted Lipton-Tarjan separator theorem for planar graphs \cite{Lipton} and results from \cite{Gazit}.

\begin{theorem}[\cite{Pach}]
\label{thm:bw}
Let $G$ be a graph on $n$ vertices with degrees $d_1,\dots,d_n$. Then
$$b(G)\leq1.58\left(16cr(G)+\sum_{i=1}^nd_i^2\right)^{1/2},$$
where $b(G)$ and $cr(G)$ denote the bisection width and the crossing number of $G$, respectively. 
\end{theorem}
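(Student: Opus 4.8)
The plan is to take a crossing-optimal drawing of $G$, planarize it, and apply a cost-weighted version of the planar separator theorem to the resulting plane graph; the separator will translate into a balanced edge cut of $G$ whose size is controlled by the $\ell_2$-norm of a carefully chosen cost function.

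First I would fix a drawing of $G$ with exactly $cr(G)$ crossings and form its planarization $\tilde{G}$ by replacing each crossing with a new degree-$4$ vertex. Thus $\tilde{G}$ is a plane graph on the vertex set $V\cup C$, where $|C|=cr(G)$, and every edge $e=uv\in E$ becomes a path in $\tilde{G}$ from $u$ to $v$ whose interior vertices all lie in $C$. To each vertex of $\tilde{G}$ I assign a cost equal to its degree in $\tilde{G}$: $c(v)=d_v$ for $v\in V$ and $c(x)=4$ for $x\in C$. The point of this choice is that
$$\sum_{u\in V\cup C}c(u)^2=\sum_{v\in V}d_v^2+16\,cr(G),$$
which is exactly the quantity appearing under the square root in the statement.

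Next I would invoke the cost-weighted separator theorem obtained by combining the weighted Lipton--Tarjan separator theorem \cite{Lipton} with the refinement of \cite{Gazit}: for a plane graph equipped with a nonnegative balancing weight and a nonnegative cost, there is a partition of its vertices into $A$, $B$, $S$ such that no edge joins $A$ to $B$, each of $A$ and $B$ carries at most $2/3$ of the total balancing weight, and $\sum_{u\in S}c(u)\leq 1.58\,(\sum_u c(u)^2)^{1/2}$. I apply it with balancing weight $1$ on each vertex of $V$ and $0$ on each vertex of $C$, so that $A$ and $B$ contain at most $2n/3$ original vertices each, and with the cost $c$ defined above. Converting $S$ into a bisection of $G$, I would distribute the original vertices of $S$ between the two sides so as to keep each side below $2n/3$, obtaining a partition $V=V_1\cup V_2$. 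Any edge of $E$ with endpoints on opposite sides corresponds to a path in $\tilde{G}$ joining the $A$-region to the $B$-region, and since no edge of $\tilde{G}$ joins $A$ to $B$ this path must meet $S$; charging each such edge to the first separator vertex it meets shows that the number of edges charged to a given $u\in S$ is at most its $\tilde{G}$-degree $c(u)$. Hence $|E(V_1,V_2)|\leq\sum_{u\in S}c(u)\leq 1.58\,(16\,cr(G)+\sum_v d_v^2)^{1/2}$, which is the desired bound on $b(G)$.

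The main obstacle is obtaining the separator with its cost measured by the $\ell_2$-norm $(\sum_u c(u)^2)^{1/2}$ rather than by $|S|\cdot\max_u c(u)$; this $\ell_2$ bound, together with the sharp constant and the simultaneous control of a \emph{separate} balancing weight, is precisely what the cited combination of \cite{Lipton} and \cite{Gazit} supplies, so the bulk of the analytic work is black-boxed there. The remaining points are routine: verifying that the original vertices of $S$ can be apportioned between the two sides so that both stay at most $2n/3$, and checking the charging argument so that each cut edge is counted exactly once and each separator vertex $u$ absorbs at most $c(u)$ of them (noting that a crossing vertex, though of cost $4$, lies on only two edges of $G$).
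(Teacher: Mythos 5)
Your proposal is correct and takes essentially the same route as the paper, which does not prove Theorem~\ref{thm:bw} itself but cites \cite{Pach} and describes its proof as exactly the combination you use: planarize a crossing-optimal drawing (so that with cost equal to degree in the planarization one gets $\sum_u c(u)^2=\sum_i d_i^2+16\,cr(G)$), apply the weighted Lipton--Tarjan separator theorem \cite{Lipton} together with the $\ell_2$-norm refinement of \cite{Gazit}, and convert the vertex separator into a balanced edge cut by the charging argument you describe. The single ingredient you black-box---the $1.58\bigl(\sum_u c(u)^2\bigr)^{1/2}$ cost bound with an independent balancing weight---is precisely what the cited sources provide, so your sketch is faithful to the proof the paper points to.
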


In fact, we use Theorem \ref{thm:bw} in a similar way as it was used in \cite{Pach}.

Now, we are ready to prove Theorem \ref{thm:R2}.

\begin{proof} [Proof of Theorem \ref{thm:R2}]
We prove by induction on $n$ that $m\leq cn\log^2n$ holds for an appropriate constant $c>1$,
which will not be stated explicitly. 
For the base case, one can easily see that the theorem holds for the graphs with at most $3$ vertices.

For the inductive case, suppose that the theorem holds for all graphs having fewer than $n\geq 3$ vertices.

Observe that
$$\sum_{i=1}^nd_i^2\leq\sum_{i=1}^nd_in=2mn$$ holds for every graph. By Lemma \ref{lemma:oneEdge} we also have $cr(G)=O(nm)$ (we can suppose that $m\geq n$). Hence, 
by Theorem \ref{thm:bw}, it follows that $b(G)=O(\sqrt{nm})$. Let us assume that $b(G)\leq d\sqrt{nm}$ for some constant $d>0$.

Consider a partition of $V(G)$ into two parts $V_1$ and $V_2$, so that $|V_1|,|V_2|\leq2n/3$ and the number of edges between them is $b(G)$.
Let $G_1$ and $G_2$ denote the subgraphs of $G$ induced by $V_1$ and $V_2$. By the induction hypothesis both $G_1$ and $G_2$  belong to $R_2$.
Thus, we have the following:

\begin{equation}\label{eqn:bw} m\leq|E(G_1)|+|E(G_2)|+b(G)\leq cn_1\log^2n_1+cn_2\log^2n_2+d\sqrt{mn}\end{equation}
where $n_i=|V_i|$ ($i=1,2$). From (\ref{eqn:bw}) we get 
  
\begin{equation}\label{eqn:q} \sqrt{m}\leq\frac{d\sqrt n+\sqrt{d^2n+4cn_1\log^2n_1+4cn_2\log^2n_2}}{2}\end{equation}

Now it is enough to prove that we can choose $c$ large enough so that

\begin{equation}\label{eqn:l} \frac{d\sqrt n+\sqrt{d^2n+4cn_1\log^2n_1+4cn_2\log^2n_2}}{2}\leq\sqrt{cn\log^2n}\end{equation}
holds for every $n\geq 4$, since then (\ref{eqn:q}) and (\ref{eqn:l}) would imply that $m\leq cn\log^2n\,.$

Let us assume that $n_1=an$ and $n_2=bn$, where $a,b\in[1/3,2/3]$ and $a+b=1$\,.
By Jensen's inequality 
$$a\log^2(an)+b\log^2(bn)\leq\log^2[(a^2+b^2)n]\leq\log^2\left(\frac59n\right)\,,$$ 
so it is enough to find $c$ large enough so that 

\begin{equation}\label{eqn:ll} \frac{d+\sqrt{d^2+4c\log^2(5/9n)}}{2}\leq\sqrt{c\log^2n}\end{equation}
holds for every $n$. After some calculation the inequality (\ref{eqn:ll}) can be reduced to

\begin{equation}\label{eqn:lll} d\log n\leq\sqrt c\,[2\log(9/5)\log n-\log^2(5/9)]\end{equation}
It is easy to see that for large enough $c$ this inequality  holds for every $n\geq2$.
  \end{proof}

\section{Remarks}

The actual analysis of the drawing of $G$ in the proof of Theorem \ref{thm:R2} is undeniably very rough,
not to mention the application of the divide-and-conquer strategy.
Hence, one is prone to believe that the right order of magnitude is in this case linear as well.
This is also supported by our unsuccessful attempt to construct a super linear complementary lower bound.

Hence, to us it appears likely that a more clever and/or involved application of just
discharging method than that used in the proof of Theorem \ref{thm:R1} 
could yield a linear bound also in case of the class $R_2$.




\begin{thebibliography}{99}



\bibitem{Appel}
K. Appel, W. Haken, J. Koch: {\it Every Planar Map is Four Colorable}, Illinois Journal of Mathematics 21: 439–-567, 1977.

\bibitem{Appel2}
K. Appel, W. Haken: {\it Solution of the Four Color Map Problem}, Scientific American {\bf 237} (4): 108-–121, 1977.

\bibitem{Ackerman}
E. Ackerman, G. Tardos: {\it On the maximum number of edges in quasi-planar graphs},
J. Comb. Theory, Ser. A {\bf 114}(3): 563--571 (2007).

 
\bibitem{Ajtai}
M. Ajtai, V. Chv\'{a}tal, M. M. Newborn, and E. Szemer\'{e}di, {\it Crossing-free subgraphs},
 Theory and Practice of Combinatorics, volume 12 of Annals of Discrete Mathematics and volume 60 of North-Holland Mathematics Studies, 9--12. 1982. 

\bibitem{Dujmovic}
V. Dujmovi\'{c}, J. Gudmundsson, P. Morin, T. Wolle: {\it
Notes on large angle crossing graphs},
http://arxiv.org/abs/0908.3545


\bibitem{Didimo}
W. Didimo, P. Eades, G. Liotta: {\it Drawing Graphs with Right Angle Crossings}, WADS 2009: 206-217.

\bibitem{Gazit}
H. Gazit and G. L. Miller: {\it Planar separators and the Euclidean norm, Algorithms}, Proc. International
Symp. SIGAL '90 (T. Asano et al. eds.), Lecture Notes in Computer Science, Vol. 45(I),
Springer-Verlag, Berlin, 1990, pp. 338--347.


\bibitem{Hlineny}
P. Hlinen\'{y}: {\it Discharging technique in practice},
\url{http://kam.mff.cuni.cz/~kamserie/serie/clanky/2000/s475.ps}, (Lecture text for Spring School on Combinatorics),
(2000).
\bibitem{Lipton}
R. J. Lipton and R. E. Tarjan: {\it A separator theorem for planar graphs}, SlAM J. Appl. Math. {\bf 36}
(1979), 177--189.


\bibitem{Pach}
J. Pach, F. Shahrokhi, M. Szegedy: {\it Applications of the Crossing Number}, Algorithmica (1996) {\bf 16}:  1--117

\bibitem{Pach2}
J. Pach, P. Agarwal: {\it Combinatorial Geometry}, Wiley, 1995.

%


\bibitem{Wernicke}
P. Wernicke: {\it \"{U}ber den kartographischen Vierfarbensatz} (in German), Math. Ann. {\bf 58} (3): 413–-426,  1904.






\end{thebibliography}
\end{document}